\documentclass{amsart}

\usepackage[T1]{fontenc}
\usepackage{amsfonts}

%%% MATH
\usepackage{amsthm,amsmath,amssymb}
\usepackage{mathrsfs}
%%% BIBLIOGRAPHY
%\usepackage[numbers]{natbib}  %% numbers is required.
%%% LINKS
\usepackage[colorlinks,citecolor=blue,urlcolor=blue]{hyperref}  %%check

\usepackage{enumerate}

\usepackage{tgtermes}
\usepackage{mathptmx} 
\usepackage[scaled=.92]{helvet}

\title{Homomorphisms Between Rings with Infinitesimals and Infinitesimal Comparisons}
\author{Emanuele Bottazzi}
\address{University of Pavia, Italy, \href{mailto:emanuele.bottazzi@unipv.it}{emanuele.bottazzi@unipv.it}, \href{mailto:emanuele.bottazzi.phd@gmail.com}{emanuele.bottazzi.phd@gmail.com}}
\date{\today}

%\numberwithin{equation}{section}

\newtheorem{theorem}{Theorem}[section]
\newtheorem{lemma}[theorem]{Lemma}
\newtheorem{proposition}[theorem]{Proposition}
\newtheorem{corollary}[theorem]{Corollary}
\newtheorem{definition}[theorem]{Definition}

%\theoremstyle{remark}

%\startlocaldefs
\newcommand{\N}{\mathbb{N}}

\newcommand{\R}{\mathbb{R}}

\newcommand{\hR}{\,\!^\ast\R}
 %comandi sinistri
 %comandi sinistri

\newcommand{\nhR}{\,\!^{+}\mathbb{\R}}

\newcommand{\blob}{\rule[.2ex]{0.6ex}{0.6ex}}
\newcommand{\fR}{\,\!^{\blob}\mathbb{\R}}
\newcommand{\gioR}{\,\!^{\bullet}\mathbb{\R}}

\renewcommand{\null}{c_{00}}
\newcommand{\frechet}{\mathcal{F}}
\newcommand{\U}{\mathcal{U}}
\renewcommand{\o}{{o(1/n)}}
%\endlocaldefs

\begin{document}
	
\maketitle
	
\begin{abstract}
	We examine an argument of Reeder suggesting that the nilpotent infinitesimals in Paolo Giordano's ring extension of the real numbers $\gioR$ are smaller than any infinitesimal hyperreal number of Abraham Robinson's nonstandard extension of the real numbers $\hR$.
	Our approach consists in the study of two canonical order-preserving homomorphisms taking values in $\gioR$ and $\hR$, respectively, and whose domain is Henle's extension of the real numbers in the framework of ``non-nonstandard'' analysis.
	The existence of a nonzero element in Henle's ring that is mapped to $0$ in $\gioR$ while it is seen as a nonzero infinitesimal in $\hR$ suggests that some infinitesimals in $\hR$ are smaller than the infinitesimals in $\gioR$.
	We argue that the apparent contradiction with the conclusions by Reeder is only due to the presence of nilpotent elements in $\gioR$.
\end{abstract}

\tableofcontents

In \cite{reeder}, Reeder suggested a comparison of the size of the infinitesimals in $\hR$, a field of hyperreal numbers of nonstandard analysis \cite{nsa robinson}, and $\gioR$, Paolo Giordano's ring extension of the real numbers with nihlpotent infinitesimals, whose construction is presented in \cite{giordano}.
Reeder outlined two possible approaches to the problem.
The first is the comparison of the size of the invertible infinitesimals and of the nilpotent infinitesimals in a nonstandard model of $\gioR$, while the second is the study of the order-preserving homomorphisms between the field and the ring.

Following the second line of research, Reeder
proved that there is no non-trivial homomorphism different from the standard part map from Paolo Giordano's extension of the real numbers $\gioR$  to a field of hyperreal numbers of Robinson's nonstandard analysis $\hR$, and that there are non-trivial homomorphisms different from the standard part map from $\hR$ to $\gioR$.
As a consequence of this result, the author suggested that the infinitesimals in $\gioR$ are smaller than the infinitesimals in $\hR$.

Inspired by Reeder, we propose a further comparison between the size of the nilpotent infinitesimals in $\gioR$ and the infinitesimals of $\hR$.
We will carry out this comparison by studying two canonical order-preserving homomorphisms taking values in $\gioR$ and $\hR$, respectively, and whose domain is $\nhR$, Henle's extension of the real numbers in the framework of ``non-nonstandard'' analysis \cite{henle}.
We will show that there exists a nonzero infintesimal in Henle's ring whose canonical image in $\gioR$ is zero, while its canonical image in $\hR$ is a positive infinitesimal.
The existence of a nonzero element in Henle's ring that is mapped to $0$ in $\gioR$ while it is seen as a nonzero infinitesimal in $\hR$ suggests that some infinitesimals in $\hR$ are smaller than the infinitesimals in $\gioR$.
We believe that the apparent contradiction with Reeder is only due to the presence of nilpotent elements in $\gioR$.

In Section \ref{section review} we will briefly review the definition of Henle's ring $\nhR$, of the ultrapower construction of a field of hyperreal numbers $\hR$, and of Giordano's ring extension of the real numbers $\gioR$.
The core of the paper consists of Section \ref{section homomorphisms}, where we will discuss some canonical homomorphisms between the extensions of the real numbers introduced in Section \ref{section review}.
Our main result, namely the existence of a nonzero element $r$ in Henle's ring that is ``too small'' to be registered as nonzero in $\gioR$ while it is seen as a nonzero infinitesimal in $\hR$, will be presented at the end of this section. % \ref{section main}.
The paper concludes with an attempt at reconciling our result and the conclusions drawn out by Reeder.

\section{Rings with infinitesimals}\label{section review}

Let us recall the definition of infinitesimal and finite elements in ordered rings that extend the real line.

\begin{definition}
	Let $R \supset \R$ be an ordered ring.
	We say that $r \in R$ is an infinitesimal iff for all $n \in \N$, $-1/n \leq r \leq 1/n$.
	We say that $r \in R$ is finite iff there exists $n \in \N$ such that $-n \leq r \leq n$, and we will denote by $R_F$ the subring of finite elements of $R$.
\end{definition} 

\subsection{Fields of hyperreal numbers of Robinson's nonstandard analysis.}

There are many approaches for the introduction of fields of hyperreal numbers of Robinson's nonstandard analysis.
For the purpose of our infinitesimal comparisons, it is convenient to describe them as ultrapowers of $\R^\N$ with respect to nonprincipal ultrafilters.
We recall their definition mostly to fix the notation.

\begin{definition}
	Let $\U \supset \frechet$ be a nonprincipal ultrafilter.
	A field of hyperreal numbers of Robinson's nonstandard analysis is defined as $\hR = \R^\N/\U$.
\end{definition}

The sum, the product and the order over $\hR$ are defined by means of the quotient.
Explicitly, we have
\begin{itemize}
	\item $[x]_\U+[y]_\U = [x+y]_\U$;
	\item $[x]_\U\cdot[y]_\U = [xy]_\U$;
	\item $[x]_\U \leq_\U [y]_\U$ if and only if $\{ n\in \N : x_n \leq y_n \} \in \U$.
\end{itemize}

The latter definition entails that the order $\leq_{\U}$ is total, since the properties of ultrafilters ensure that if $\{ n\in \N : x_n \leq y_n \} \not\in \U$, then $\{ n\in \N : y_n \leq x_n \} = \{ n\in \N : x_n \leq y_n \}^c \in \U$.

The relevant algebraic properties of $\hR$ are well-known and summarized below.

\begin{lemma}
	$\hR$ is a non-archimedean linearly ordered field.
\end{lemma}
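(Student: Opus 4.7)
The plan is to verify three separate claims in turn: that $\hR$ is a field, that the order $\leq_\U$ makes it into a linearly ordered field, and that it contains a positive infinitesimal, so it is non-archimedean.

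First, for the field axioms, I would start by checking that addition and multiplication descend to the quotient $\R^\N/\U$. This uses only that $\U$ is a filter: if $\{n : x_n = x'_n\}$ and $\{n : y_n = y'_n\}$ both belong to $\U$, then so does their intersection, which is contained in both $\{n : x_n + y_n = x'_n + y'_n\}$ and $\{n : x_n y_n = x'_n y'_n\}$. The commutative ring axioms then transfer pointwise from $\R$. The decisive step is the existence of multiplicative inverses: if $[x]_\U \neq [0]_\U$, then $\{n : x_n = 0\} \notin \U$, and because $\U$ is an ultrafilter its complement $A = \{n : x_n \neq 0\}$ lies in $\U$. Setting $y_n = 1/x_n$ on $A$ and $y_n = 0$ off $A$ yields $[x]_\U \cdot [y]_\U = [1]_\U$. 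This is the only place where being an ultrafilter, not merely a filter, is needed for the algebra.

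Second, to see that $(\hR, \leq_\U)$ is a linearly ordered field, I would verify the two compatibility axioms. Translation invariance $[x]_\U \leq_\U [y]_\U \Rightarrow [x+z]_\U \leq_\U [y+z]_\U$ is immediate since the defining set of indices is unchanged. Closure of the positive cone under multiplication follows from the inclusion $\{n : x_n \geq 0\} \cap \{n : y_n \geq 0\} \subseteq \{n : x_n y_n \geq 0\}$ together with the closure properties of $\U$. Totality of the order has already been observed in the excerpt.

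For the non-archimedean property, I would exhibit an explicit infinitesimal by taking $x_n = 1/(n+1)$ and setting $\varepsilon = [x]_\U$. For each positive integer $k$ the set $\{n \in \N : 0 < 1/(n+1) \leq 1/k\}$ is cofinite, hence belongs to $\frechet \subseteq \U$. Therefore $0 <_\U \varepsilon \leq_\U 1/k$ for every $k \geq 1$, so $\varepsilon$ is a nonzero infinitesimal and the Archimedean property fails. The only non-routine obstacle in the whole argument is the inverse step, which relies essentially on $\U$ being an ultrafilter rather than just a filter; every other verification reduces to observing that the relevant $\U$-almost-everywhere identities descend to the quotient.
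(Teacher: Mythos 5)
The paper does not actually prove this lemma: it is stated as a known fact, with the text immediately before observing only that the ultrafilter property forces $\leq_\U$ to be total, and the text immediately after deferring all details to Goldblatt's textbook. Your proof fills in that omitted standard argument, and it is correct. You rightly isolate the two places where the ultrafilter property (as opposed to a mere filter) is indispensable, namely totality of the order and existence of multiplicative inverses, and the construction of $y$ by setting $y_n = 1/x_n$ on $A = \{n : x_n \neq 0\}$ and $y_n = 0$ elsewhere is the standard move. The explicit infinitesimal $[1/(n+1)]_\U$ with the cofiniteness observation is likewise correct and matches the spirit of the analogous computation the paper does carry out later for $\nhR$. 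One small tightening worth noting: to justify that $(\hR,\leq_\U)$ is a linear order on the \emph{quotient}, you should also record that $\leq_\U$ is well-defined on equivalence classes and is antisymmetric; both reduce to the observation that if $\{n : x_n \leq y_n\}$ and $\{n : y_n \leq x_n\}$ (or $\{n : x_n = x'_n\}$, etc.) lie in $\U$, then so does their intersection, and a superset of a set in $\U$ is in $\U$. These are of the same routine nature as the steps you do spell out, so this is a matter of completeness rather than a gap in the method.
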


For an in-depth presentation of the hyperreal numbers of nonstandard analysis, we refer for instance to \cite{goldblatt}.

\subsection{The ring extension of the real numbers in Henle's non-nonstandard analysis.}

In \cite{henle}, Henle proposed a way of constructing real infinitesimals with an approach similar to that of nonstandard analysis, but without relying on the axiom of choice.
Henle's ring extension of the real numbers is obtained as a quotient of the space of real sequences by the Frechet filter.

\begin{definition}
	Recall that the Frechet filter is defined as $\frechet = \{ A \subseteq \N : A \text{ is cofinite}\}$.
	The ring extension of the real numbers in Henle's non-nonstandard analysis is defined as the quotient
	$\nhR = \R^\N/\frechet$.
\end{definition}

The sum, the product and the order over $\nhR$ are defined in analogy to the corresponding operations and relation in $\hR$.
\begin{itemize}
	\item $[x]_\frechet+[y]_\frechet = [x+y]_\frechet$;
	\item $[x]_\frechet\cdot[y]_\frechet = [xy]_\frechet$;
	\item $[x]_\frechet \leq_\frechet [y]_\frechet$ if and only if $\{ n\in \N : x_n \leq y_n \} \in \frechet$.
\end{itemize}

Notice that, in contrast to the order over the hyperreal fields $\hR$, the order over $\nhR$ is not total.
This happens because there are some sets $A \subset \N$ such that $A \not \in \frechet$ and $A^c \not \in \frechet$.

The algebraic properties of $\nhR$ that will be relevant for the purpose of our infinitesimal comparisons are recalled in the next Lemma.

\begin{lemma}
	\begin{enumerate}
		\item $\nhR$ is a partially ordered ring;
		\item there are nonzero infinitesimals in $\nhR$;
		\item $\nhR$ has $0$-divisors;
		\item $\nhR$ has no nilpotent elements.
	\end{enumerate}
\end{lemma}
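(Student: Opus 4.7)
The plan is to verify the four items in turn, all by exploiting the fact that $\frechet$ is a proper filter on $\N$ (so it is closed under finite intersection, upward closed, and does not contain the empty set), combined with the corresponding ring/order properties of $\R$ coordinatewise.

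For (1), I would first check that the operations and the relation $\leq_\frechet$ are well-defined on the quotient: if $[x]_\frechet=[x']_\frechet$ and $[y]_\frechet=[y']_\frechet$, then $\{n:x_n=x'_n\}$ and $\{n:y_n=y'_n\}$ are cofinite, hence so is their intersection, and on this intersection $x_n+y_n=x'_n+y'_n$ and $x_ny_n=x'_ny'_n$. A similar argument handles $\leq_\frechet$. The ring axioms then follow pointwise from those of $\R^\N$. Reflexivity of $\leq_\frechet$ is immediate, and antisymmetry and transitivity reduce to the fact that the intersection of two cofinite sets is cofinite. Compatibility with addition and with multiplication by nonnegative elements is likewise a pointwise statement carried through the quotient.

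For (2), I would exhibit the class of the sequence $x_n=1/n$. For any $m\in\N$, the set $\{n:-1/m\le 1/n\le 1/m\}$ contains all $n\ge m$ and is therefore cofinite, so $[x]_\frechet$ is infinitesimal; and it is nonzero because $\{n:1/n=0\}=\emptyset\notin\frechet$.

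For (3), I would take $x_n=1$ when $n$ is even and $0$ otherwise, and $y_n=1-x_n$. Then $x_ny_n=0$ for every $n$, so $[x]_\frechet[y]_\frechet=0$; but neither $\{n:x_n=0\}$ nor $\{n:y_n=0\}$ is cofinite, so both classes are nonzero.

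For (4), I would observe that $\R$ has no nonzero nilpotents. Hence for any sequence $x\in\R^\N$ and any $k\ge 1$, $\{n:x_n^k=0\}=\{n:x_n=0\}$. Therefore $[x]_\frechet^k=0$ forces $\{n:x_n=0\}\in\frechet$, i.e.\ $[x]_\frechet=0$. The only step with any real content is item (3), and even there the main point is simply to notice that the Frechet filter, unlike an ultrafilter, contains neither the evens nor the odds, so it is not a prime filter and the quotient need not be a domain; I do not anticipate any genuine obstacle.
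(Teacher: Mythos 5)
Your proposal is correct and for items (2)--(4) follows essentially the same route as the paper: the same $1/n$ example for (2), an indicator-of-evens/odds pair of zero-divisors for (3) (the paper uses $1\mp(-1)^n$, which is the same idea), and for (4) the observation that $\{n:x_n^k=0\}=\{n:x_n=0\}$ because $\R$ has no nonzero nilpotents. Your phrasing of (4) is actually a bit tighter than the paper's, which writes ``$\{n:x_n\neq 0\}\in\frechet$'' as a consequence of $[x]_\frechet\neq 0$ --- a slip, since nonzero in the quotient only says $\{n:x_n=0\}\notin\frechet$; your version via the complement avoids this.

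The one place you diverge in substance is item (1). You verify that $\leq_\frechet$ and the ring operations are well-defined on the quotient and that $\leq_\frechet$ is a partial order compatible with the ring structure. The paper instead proves (1) by exhibiting two incomparable elements, $[1-(-1)^n]_\frechet$ and $[1]_\frechet$, i.e.\ it shows the order is genuinely \emph{not total} --- this is what the paper is actually interested in (the preceding remark contrasts $\nhR$ with the totally ordered $\hR$), and it quietly takes the routine well-definedness for granted. Your version proves the literal statement more completely, while the paper's version carries the intended informational content; it would be worth adding one sentence exhibiting incomparable classes (e.g.\ your $x$ from item (3) is not comparable with $[1]_\frechet$), since that is the feature the rest of the paper leans on.
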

\begin{proof}
	(1).
	Observe that $\{n \in \N : 1-(-1)^n\leq 1\} \not\in \frechet$ and that $\{n \in \N : 1\leq 1-(-1)^n \} \not\in \frechet$.
	As a consequence, $[1-(-1)^n]_{\frechet}$ is not comparable with $[1]_{\frechet}$.
	
	(2). The element $[1/n]_{\frechet}$ is a nonzero infinitesimal, since for all $m \in \N$ the inequality $0 < 1/n < m$ is eventually satisfied.
	
	(3). Consider $[1-(-1)^n]_{\frechet}$ and $[1+(-1)^n]_{\frechet}$.
	Both are nonzero elements of $\nhR$, but it is readily verified that $[1-(-1)^n]_{\frechet}\cdot[1+(-1)^n]_{\frechet} = 0$.
	
	(4). Suppose that $[x]_{\frechet} \in \nhR$ is different from $0$.
	From the definition of equality in $\nhR$, we deduce that $\{ n \in \N :x_n \not = 0 \} \in \frechet$.
	Moreover, for all $k \in \N$ it holds the equality $\{ n \in \N :x_n^k \not = 0 \} = \{ n \in \N :x_n \not = 0 \}$, so that $[x]^k_{\frechet} \not = 0$, as desired.
\end{proof}

It is possible to define $\nhR$ also as the quotient of the ring $\R^\N$ with respect to the nonprincipal ideal consisting of the eventually vanishing sequences.

\begin{lemma}\label{lemma isomorfismo}
	If we define
	$\null = \{ x \in \R^\N : \exists \overline{n}\in\N \text{ such that } x_n = 0 \text{ for all } n \geq \overline{n} \},$
	then $\null$ is an ideal of the ring $\R^\N$, and $\R^\N / \null$ is an ordered ring isomorphic to $\nhR$.
\end{lemma}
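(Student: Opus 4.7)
The plan is to proceed in three stages: first verify that $\null$ is an ideal, then establish a canonical bijection between $\R^\N/\null$ and $\nhR$ coming from the fact that both quotients identify the \emph{same} pairs of sequences, and finally transfer the ring operations and order from $\nhR$ through this bijection.

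For the first stage, I would just check the ideal axioms directly. If $x,y \in \null$ with $x_n = 0$ for $n \geq n_1$ and $y_n = 0$ for $n \geq n_2$, then $(x-y)_n = 0$ for $n \geq \max\{n_1,n_2\}$. If $z \in \R^\N$ is arbitrary and $x \in \null$ vanishes from $\overline{n}$ onwards, then $(zx)_n = z_n \cdot 0 = 0$ from $\overline{n}$ onwards. So $\null$ is closed under differences and absorbs multiplication, hence is an ideal.

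The heart of the argument is the observation that the congruence induced by $\null$ coincides with the equivalence $\sim_{\frechet}$. Indeed,
\begin{equation*}
x - y \in \null \iff \{n \in \N : x_n - y_n \neq 0\} \text{ is finite} \iff \{n \in \N : x_n = y_n\} \text{ is cofinite} \iff \{n \in \N : x_n = y_n\} \in \frechet,
\end{equation*}
and the last condition is exactly $[x]_\frechet = [y]_\frechet$. Consequently the map $\varphi : \R^\N/\null \to \nhR$ sending the $\null$-class of $x$ to $[x]_\frechet$ is a well-defined bijection. Since the sum and product on $\R^\N/\null$ are induced coordinatewise (as $\null$ is an ideal) and the sum and product on $\nhR$ are defined by the same coordinatewise rule, $\varphi$ is immediately a ring homomorphism.

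It remains to transport the order. One defines $[x]_{\null} \leq [y]_{\null}$ in $\R^\N/\null$ precisely when $[x]_\frechet \leq_\frechet [y]_\frechet$, i.e. when $\{n : x_n \leq y_n\} \in \frechet$. The only point requiring a small check is that this relation is well-defined on $\null$-classes, but this is automatic from the coincidence of the two equivalences established above: altering $x$ or $y$ on a finite set of indices changes $\{n : x_n \leq y_n\}$ by a finite set, hence preserves its membership in $\frechet$. With this definition $\varphi$ becomes an isomorphism of ordered rings by construction. I do not anticipate any real obstacle — the argument is essentially bookkeeping once one notices that ``eventually zero'' and ``equal on a cofinite set'' are two formulations of the same relation.
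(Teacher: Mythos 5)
The paper states this lemma without giving a proof, treating it as a routine observation, so there is nothing to compare against; your task is simply to supply the missing argument, and your proposal does so correctly. The key point — that $x - y \in \null$ if and only if $\{n : x_n = y_n\} \in \frechet$, so the two quotients identify exactly the same pairs of sequences — is exactly the right observation, and the verification that $\null$ is an ideal and that the induced map is a ring isomorphism is sound.

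One small refinement is worth making on the order. As written, you \emph{decree} the order on $\R^\N/\null$ to be the one pulled back from $\nhR$ through $\varphi$, which makes the order-isomorphism part of the claim tautological. It is slightly more informative to take the natural quotient order on $\R^\N/\null$ — say $[x]_{\null} \leq [y]_{\null}$ iff there exist representatives $x' \in [x]_{\null}$, $y' \in [y]_{\null}$ with $x'_n \leq y'_n$ for all $n$, or equivalently the positive cone is the image of the pointwise positive cone — and then \emph{check} that this coincides with the Frechet order. The check is immediate: modifying $x$ and $y$ by elements of $\null$ can only change $\{n : x_n \leq y_n\}$ on a finite set, so $x'_n \leq y'_n$ for all $n$ and for some choice of representatives happens precisely when $\{n : x_n \leq y_n\}$ is cofinite. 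With that adjustment the proof establishes that $\varphi$ is an order isomorphism between the two ordered rings each equipped with its own intrinsic order, which is the sharper reading of the lemma.
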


\subsection{Paolo Giordano's ring extension of the real numbers with nilpotent infinitesimals.}

We follow the definition of Paolo Giordano's ring extension of the real numbers with nilpotent infinitesimals as exposed in \cite{giordano-katz}.

\begin{definition}
	Define $\R^\N_B$ as the ring of bounded sequences.
	The set $\o \subset \R^\N_B$ defined as $\o = \{ f \in \R^\N : \lim_{n \rightarrow \infty} n f(n) = 0\}$ is an ideal of $\R^\N_B$.
	Define the ring $\fR$ as the quotient $\fR = \R^\N_B/\o$.
\end{definition}

The sum, the product and the order over $\fR$ are defined by means of the quotient.
Explicitly, we have
\begin{itemize}
	\item $[x]_\o+[y]_\o = [x+y]_\o$;
	\item $[x]_\o\cdot[y]_\o = [xy]_\o$ whenever $x$ and $y$ are bounded sequences;
	\item $[x]_\o \leq_\o [y]_\o$ if and only if there exists $z \in \o$ and there exists $\overline{n}\in\N$ such that $x_n \leq y_n + z_n$ for all $n \geq \overline{n}$.
\end{itemize}

The ring $\fR$ has a different algebraic structure than $\nhR$, due to the presence of nilpotent elements.

\begin{lemma}
	\begin{enumerate}
		\item $\fR$ is a partially ordered ring;
		\item there are nonzero infinitesimals in $\fR$;
		\item $\fR$ has nilpotent elements and, as a consequence, it has $0$-divisors.
	\end{enumerate}
\end{lemma}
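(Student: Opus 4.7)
The plan is to dispatch each of the three claims by writing down an explicit representative sequence and appealing to the characterisation $f \in \o$ iff $n f_n \to 0$.

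For (1) I would first note that the ring operations on $\fR$ are well-defined precisely because $\o$ is an ideal of $\R^\N_B$, and that the order $\leq_\o$ is reflexive, transitive, antisymmetric and compatible with $+$ and with multiplication by classes of nonnegative sequences by routine quotient arguments; antisymmetry uses the observation that if $-z_2(n) \leq (x-y)_n \leq z_1(n)$ eventually with $z_1,z_2 \in \o$, then $|x-y| \leq |z_1|+|z_2| \in \o$. The substantive part is that the order is only partial, and for this I would use $x_n = (-1)^n/n$: it is bounded, not in $\o$ (since $n x_n = (-1)^n \not\to 0$), and incomparable with $0$, because $[x]_\o \geq 0$ would force some $z \in \o$ to satisfy $z_n \geq 1/n$ on all sufficiently large odd $n$, contradicting $n z_n \to 0$, and symmetrically for $[x]_\o \leq 0$.

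For (2), the class $[1/n]_\o$ is a nonzero infinitesimal: it is nonzero since $n \cdot 1/n = 1 \not\to 0$, and for each $m \in \N$ the inequality $-1/m \leq 1/n \leq 1/m$ holds for all $n \geq m$, so the witness $z = 0$ realises $-1/m \leq_\o [1/n]_\o \leq_\o 1/m$.

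For (3), the key calculation is that $x_n = n^{-\alpha}$ satisfies $n x_n^k = n^{1-k\alpha}$, which tends to $0$ precisely when $k\alpha > 1$, while $n x_n = n^{1-\alpha}$ fails to vanish as long as $\alpha \leq 1$. Choosing, for instance, $\alpha = 2/3$ and $k = 2$ exhibits $[n^{-2/3}]_\o$ as a nonzero nilpotent of index two in $\fR$. As usual, any nilpotent $a$ of minimal index $k \geq 2$ provides the zero-divisor pair $(a, a^{k-1})$, so the second half of (3) is immediate.

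The only step that genuinely needs care rather than mere computation is the partial-order claim in (1); its force is the asymmetric fact that $\o$ is too thin to contain any sequence eventually bounded below by $1/n$ on an infinite subset of $\N$, so one must phrase the contradiction along the odd indices rather than uniformly in $n$.
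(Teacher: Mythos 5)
Your proof is correct and follows essentially the same strategy as the paper: exhibit explicit representative sequences and verify the defining conditions. The differences are only in the choice of witnesses. For the partial order in (1), the paper shows $[1-(-1)^n]_\o$ is incomparable with $[1]_\o$, while you show $[(-1)^n/n]_\o$ is incomparable with $0$; both arguments run on the same observation that $\o$ cannot contain a sequence exceeding $1/n$ along an infinite set of indices, and yours is spelled out in more detail (the paper leaves the incomparability verification to the reader). For (2) and (3), the paper handles both with the single element $[1/n]_\o$: it is nonzero since $n\cdot(1/n)=1\not\to 0$, infinitesimal, and nilpotent of index two since $n\cdot(1/n)^2 = 1/n \to 0$. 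Your separate choice of $[n^{-2/3}]_\o$ for (3) works just as well (indeed it fits the general pattern $\alpha \leq 1 < k\alpha$ you describe), but it is an unnecessary detour given that the element already in hand from (2) does the job. Your extra remarks on well-definedness of the quotient operations and on antisymmetry of $\leq_\o$ go beyond what the paper records, and they are sound; in particular the bound $|x-y| \leq |z_1| + |z_2| \in \o$ correctly closes the antisymmetry argument.
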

\begin{proof}
	(1).
	%$\fR$ is a ring, since $\o$ is an ideal of $\R^\N_B$.
	The order over $\fR$ is partial, since for instance $[1-(-1)^n]_{\o}$ is not comparable with $[1]_{\o}$.
	
	(2) and (3). The element $[1/n]_{\o}$ is an infinitesimal, since for all $m \in \N$ the inequalities $0<1/n < m$ are eventually satisfied.
	Moreover, $[1/n]_{\o}^2 = [1/n^2]_{\o}$ and, since $1/n^2 \in \o$, $[1/n]_{\o}^2 = 0$, so that $[1/n]_{\o}$ is a nilpotent element of $\fR$.
\end{proof}

Paolo Giordano's ring with infinitesimals is obtained as a subring of $\fR$.

\begin{definition}
	We say that $x\in\R^\N$ is a little-oh polynomial if and only if
	$$
	x_{n}-r+\sum_{i=1}^{k}\alpha_{i}\cdot\frac{1}{n^{a_{i}}} \in \o
	$$
	for some $k\in\N$,
	$r,\alpha_{1},\dots,\alpha_{k}\in\R$, $a_{1},\dots,a_{k}\in\R_{\ge0}$.
	Denote by $\R_{o}$ the set of little-oh polynomials.
	Paolo Giordano's ring extension of the real numbers with nilpotent infinitesimals is defined as
	$\gioR = \R_{o}/\o$.
\end{definition}

\begin{lemma}
	\begin{enumerate}
		\item $\gioR$ is a linearly ordered subring of $\fR$;
		\item there are nonzero infinitesimals in $\gioR$;
		\item every infinitesimal in $\gioR$ is nilpotent.
	\end{enumerate}
\end{lemma}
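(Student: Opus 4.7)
My plan is to work throughout with the normal form $x_n = r + \sum_{i=1}^{k} \alpha_i / n^{a_i} + o(1/n)$ that defines a little-oh polynomial; by absorbing any term with $a_i = 0$ into $r$, I will assume all exponents satisfy $a_i > 0$, so that $r = \lim_{n\to\infty} x_n$.

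For (1), I would first observe that $\R_o$ is closed under addition (automatic) and multiplication, since $(\alpha/n^{a})(\beta/n^{b}) = \alpha\beta/n^{a+b}$ and cross terms with the two $o(1/n)$ remainders remain in $o(1/n)$ thanks to the boundedness of the sequences involved. Together with $\R \subseteq \R_o$ via constant sequences, this shows $\gioR$ is a subring of $\fR$. For totality of the order, I would take $[x]_o \neq [y]_o$ in $\gioR$ and apply the normal form to $x - y$: if the leading constant $r$ is nonzero, the sign of $x_n - y_n$ is eventually that of $r$; otherwise the sign is eventually that of $\alpha_{i_0}$, where $a_{i_0}$ is the smallest exponent with $\alpha_{i_0}\neq 0$. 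A suitable $z \in o(1/n)$ then witnesses the $\fR$-order in each case. For (2), $[1/n]_o$ is the little-oh polynomial with $r=0$, $k=1$, $\alpha_1 = -1$, $a_1 = 1$, already shown to be a nonzero infinitesimal in the preceding lemma on $\fR$; it clearly lies in $\gioR$.

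For (3), the crux, let $[x]_o$ be an infinitesimal in $\gioR$ with normal form as above. By the linear order just established, infinitesimality forces $r = 0$ (otherwise $x_n$ is eventually bounded away from zero by $|r|/2$, contradicting $|x| \leq 1/m$ in $\fR$ for large $m$). Hence $x_n = \sum_{i=1}^{k} \alpha_i / n^{a_i} + \epsilon_n$ with all $a_i > 0$ and $\epsilon_n \in o(1/n)$. Setting $a = \min_i a_i > 0$, I would choose $m \in \N$ with $m a > 1$ and expand $x_n^m$ via the multinomial theorem. Each pure monomial in the $\alpha_i / n^{a_i}$ has order $1/n^{\sum a_i j_i}$ with $\sum a_i j_i \geq m a > 1$, hence lies in $o(1/n)$; each term containing a factor $\epsilon_n$ is bounded by $C \cdot |\epsilon_n|$ for a constant $C$ depending on the bound of $x$, hence also lies in $o(1/n)$. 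Therefore $x^m \in o(1/n)$ and $[x]_o^m = 0$.

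The main obstacle I anticipate is the bookkeeping of the $\epsilon_n$ cross terms in the multinomial expansion for (3); this is handled by the boundedness of $x \in \R^\N_B$, which furnishes the uniform constant $C$ needed to absorb those terms into $o(1/n)$. The other potential subtlety — that the decomposition of a little-oh polynomial is not literally unique (distinct triples $(r, \alpha_i, a_i)$ may give equivalent sequences modulo $o(1/n)$) — is harmless, because only the existence of some normal form is used, and the value of $a = \min_i a_i$ may be chosen after fixing any particular representation.
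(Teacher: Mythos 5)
The paper does not actually prove this lemma: it simply defers to the references \cite{giordano,giordano-katz}. Your proposal, by contrast, is a self-contained argument, and I think it is essentially correct. The ring-closure computation for (1) is fine, the observation that boundedness of $\alpha_i/n^{a_i}$ and $o(1/n)$ absorbs the cross terms is exactly the right point, and the totality argument by inspecting the limit $r$ (or, if $r=0$, the lowest-order nonzero coefficient) of $x-y$ is the standard one. For (3), the step that infinitesimality forces $r=0$ is a short argument worth making explicit: the inequalities $-1/m \le_o [x]_o \le_o 1/m$ for all $m$, unpacked via the definition of $\le_o$ with $z\in o(1/n)$, give $\limsup_n x_n \le 1/m$ and $\liminf_n x_n \ge -1/m$, whence $x_n\to 0$. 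From there, choosing $m$ with $m\cdot \min_i a_i > 1$ and using the multinomial expansion, with boundedness again absorbing every term containing a factor $\epsilon_n$ into $o(1/n)$, correctly yields $[x]_o^m = 0$.

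Two small points to tidy up. First, handle the degenerate case where, after setting $r=0$, all coefficients $\alpha_i$ vanish (or $k=0$): then $x\in o(1/n)$ and $[x]_o=0$ is trivially nilpotent, so $\min_i a_i$ need not be taken over an empty set. Second, you should note that the sign in the paper's definition is $x_n - r + \sum_i \alpha_i/n^{a_i} \in o(1/n)$, i.e.\ $x_n = r - \sum_i \alpha_i/n^{a_i} + o(1/n)$, so your normal form silently replaces $\alpha_i$ by $-\alpha_i$; this is harmless since the $\alpha_i$ range over all of $\R$, but worth flagging to avoid confusion. With those caveats, your argument is a valid and more explicit alternative to the paper's bare citation.
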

\begin{proof}
	For the proof of these assertions, we refer to \cite{giordano,giordano-katz}.
\end{proof}

For a matter of commodity, we will usually work with $\fR$ instead of $\gioR$.
Since the order in $\gioR$ is the restriction of the order in $\fR$, this choice does not cause any problems in the infinitesimal comparisons proposed in the next section.

\section{Homomorphisms between rings with infinitesimals}\label{section homomorphisms}

We are ready to discuss some homomorphisms between $\nhR$, $\hR$, $\fR$, and $\gioR$.
Recall that, by Lemma 2.2 of \cite{reeder}, any of such homomorphisms must either be trivial or fix $\R$.
Moreover, by Proposition 3.1 of \cite{reeder}, there are no homomorphisms from $\nhR$ or from $\hR$ to $\fR$ or to $\gioR$, since the latter does not contain infinite numbers.
However, it is possible to overcome this limitation by studying homomorphisms that are defined only on the finite elements of $\nhR$ or $\hR$.

\subsection{A general result about rings with nilpotent elements.}

For the comparison of the infinitesimals in the rings $\nhR$, $\hR$, $\fR$, and $\gioR$, it is important to keep in mind that the image of a nilpotent element through an homomorphism must be either nilpotent or $0$. Thus, if the range of a homomorphism is a ring without nilpotent elements, the image of a nilpotent element must be zero.

\begin{proposition}\label{proposition nil}
	Let $R$ be a ring, and $r \in R$ be a nilpotent element.
	Let also $R'$ be a ring without nilpotent elements.
	Then for all ring homomorphisms $\Psi : R \rightarrow R'$, $\Psi(r) = 0$.
\end{proposition}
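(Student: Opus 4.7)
The plan is to exploit the fact that ring homomorphisms preserve powers and send $0$ to $0$, so they must send nilpotent elements to nilpotent elements.

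First I would unpack the hypothesis that $r$ is nilpotent: by definition, there exists some $k \in \N$ such that $r^k = 0$. Next I would apply $\Psi$ to this equation. Since $\Psi$ is a ring homomorphism, it is multiplicative, so $\Psi(r)^k = \Psi(r^k) = \Psi(0) = 0$. This shows that $\Psi(r)$ is itself a nilpotent element of $R'$.

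Finally I would invoke the hypothesis on $R'$. The phrase ``$R'$ has no nilpotent elements'' should be read in the standard way, namely that $R'$ contains no nonzero nilpotent elements (since $0$ is trivially nilpotent in any ring). Hence the only element of $R'$ satisfying $\Psi(r)^k = 0$ for some $k$ is $0$ itself, and we conclude $\Psi(r) = 0$.

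There is essentially no obstacle here: the argument is a two-line consequence of the definitions, and the only subtlety worth flagging is the conventional reading of ``ring without nilpotent elements'' as ``ring without nonzero nilpotent elements,'' which is the reading consistent with the use of this proposition in the sequel (where the target ring $\hR$ is a field, and fields contain no nonzero nilpotents).
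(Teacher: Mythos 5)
Your proof is correct and takes essentially the same approach as the paper's: apply $\Psi$ to $r^k = 0$ to get $\Psi(r)^k = 0$ and conclude $\Psi(r) = 0$ from the absence of nonzero nilpotents in $R'$. You merely spell out the intermediate steps (and the terminological convention) that the paper leaves implicit.
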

\begin{proof}
	Since $R'$ has no nilpotent elements, the equality $(\Psi(r))^k = 0$ is satisfied only if $\Psi(r) = 0$.
\end{proof}

Thanks to this result it is possible to show that Reeder's classification of the homomorphisms from $\gioR$ to $\hR$ (Theorem 2.3 of \cite{reeder}) applies also to the homomorphisms from $\gioR$ to $\nhR$.

\begin{corollary}\label{corollary nil}
	The only non-trivial homomorphism from $\gioR$ into $\nhR$ is the standard part map.
	Similarly, the only non-trivial homomorphism from $\gioR$ into $\hR$ is the standard part map.
\end{corollary}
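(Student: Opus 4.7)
The strategy is to deduce both statements simultaneously from Proposition~\ref{proposition nil} together with the two ingredients already assembled in the paper: the fact that every infinitesimal in $\gioR$ is nilpotent (stated in the last lemma of Section~\ref{section review}), and the absence of nilpotent elements in the two codomains. For $\hR$ this is immediate from it being a field, while for $\nhR$ it was proved explicitly in the lemma characterising the algebraic structure of Henle's ring. Thus in both cases the codomain satisfies the hypothesis of Proposition~\ref{proposition nil}, so every nilpotent element of $\gioR$ must be mapped to $0$.

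Let $\Psi : \gioR \to R'$ be a non-trivial ring homomorphism, where $R'$ is either $\nhR$ or $\hR$. First I would invoke Lemma~2.2 of \cite{reeder}, which forces $\Psi$ to fix $\R$ pointwise (a trivial kernel would be excluded by the non-triviality hypothesis once $\R$ is pinned down). Next, for every $x \in \gioR$, I would use the fact that $\gioR$ consists of finite elements (it is a subring of $\fR$, which is built from bounded sequences modulo $\o$) and is linearly ordered, so $x$ admits a unique decomposition $x = \sh{x} + \varepsilon$ with $\sh{x} \in \R$ and $\varepsilon \in \gioR$ infinitesimal; concretely, for a little-oh polynomial representing $x$ the real number $\sh{x}$ is precisely the constant term $r$ appearing in its defining expression.

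Applying $\Psi$ to this decomposition gives $\Psi(x) = \Psi(\sh{x}) + \Psi(\varepsilon) = \sh{x} + \Psi(\varepsilon)$, the last equality coming from the fact that $\Psi$ fixes $\R$. Since $\varepsilon$ is infinitesimal in $\gioR$, it is nilpotent by the lemma on the algebraic structure of $\gioR$, and Proposition~\ref{proposition nil} then yields $\Psi(\varepsilon) = 0$. Hence $\Psi(x) = \sh{x}$, which is exactly the action of the standard part map. Conversely, the standard part map is a non-trivial ring homomorphism $\gioR \to \R \subseteq R'$, so the classification is complete in both cases.

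The only step that requires care is the standard part decomposition in $\gioR$: one must check that $\sh{x}$ is well-defined and that $x - \sh{x}$ is indeed infinitesimal in the sense of the definition given at the start of Section~\ref{section review}. This is not truly an obstacle, because it follows directly from the shape of little-oh polynomials, but it is the one point where one is relying on the specific construction of $\gioR$ rather than on formal properties of homomorphisms. Everything else is a purely formal consequence of Proposition~\ref{proposition nil} and Reeder's pointwise-fixing lemma, which is why the same proof handles the two codomains in parallel.
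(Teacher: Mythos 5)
Your argument is correct and uses exactly the same three ingredients the paper cites in its one-line proof: Proposition~\ref{proposition nil}, Lemma~2.2 of Reeder, and the nilpotency of all infinitesimals in $\gioR$. You have simply spelled out the intermediate step the paper leaves implicit (the standard-part decomposition $x = \sh{x} + \varepsilon$ for $x \in \gioR$), so this matches the paper's approach.
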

\begin{proof}
	Both results are a consequence of the previous Proposition, of Lemma 2.2 of \cite{reeder}, and of the fact that every infinitesimal in $\gioR$ is nilpotent.
\end{proof}

The converse of Proposition \ref{proposition nil} is false: if $R$ is a ring with nilpotent elements and if $R'$ is a ring without nilpotent elements, there can be homomorphisms from $R'$ into $R$ whose range includes nilpotent elements.
Similarly, the converse of Corollary \ref{corollary nil} is false: in the proof of Theorem 3.2 of \cite{reeder}, it is explicitly defined a non-trivial homomorphism from $\hR_F$ into $\gioR$ different from the standard part map.

\subsection{Canonical homomorphisms with domain $\nhR$.}

We now introduce two non-trivial homomorphisms from Henle's ring to the field of hyperreal numbers and to Paolo Giordano's ring extension of the real numbers.
These homomorphisms are canonical in the sense that they are defined from the algebraic structure of $\nhR$, $\hR$ and $\fR$ as quotients of the rings $\R^\N$ and $\R^\N_B$.

\begin{definition}
	We define
	\begin{itemize}
		\item $i : \nhR_F \rightarrow \fR$ by
		$i [x]_{\frechet} = [x]_{\o}$;
		\item $j : \nhR \rightarrow \hR$ by
		$j [x]_{\frechet} = [x]_{\U}$.
	\end{itemize}
\end{definition}

\begin{lemma}\label{order-preserving}
	$i$ and $j$ are well-defined, surjective (hence non-trivial), and order preserving in the sense that
	$
	[x]_{\frechet} \leq_\frechet [y]_{\frechet}$ implies
	$i [x]_{\frechet} \leq_{\o} i [y]_{\frechet}$ and
	$j [x]_{\frechet} \leq_\U j [y]_{\frechet}$.
\end{lemma}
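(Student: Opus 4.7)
The plan is to verify three properties of each map, with the main subtlety concentrated in the well-definedness of $i$, which involves reconciling a representative drawn from all of $\R^\N$ with the requirement that its image live in the quotient of $\R^\N_B$.

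First I would handle $j$, which is essentially formal. Well-definedness: if $[x]_\frechet = [y]_\frechet$, then $\{n : x_n = y_n\} \in \frechet$; since $\U \supset \frechet$, the same set lies in $\U$, hence $[x]_\U = [y]_\U$. Surjectivity: given any $[x]_\U \in \hR$, take any representative $x \in \R^\N$ and observe $j[x]_\frechet = [x]_\U$. Order preservation: the set $\{n : x_n \leq y_n\}$ belongs to $\frechet$, hence to $\U$, which is the definition of $[x]_\U \leq_\U [y]_\U$.

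Next I would treat $i$, where the key point is to produce a bounded representative. Given $[x]_\frechet \in \nhR_F$, by definition there exists $m\in\N$ with $[-m]_\frechet \leq_\frechet [x]_\frechet \leq_\frechet [m]_\frechet$, so the set $A = \{n : |x_n| \leq m\}$ is cofinite. Define $\tilde x$ to agree with $x$ on $A$ and to vanish off $A$; then $\tilde x \in \R^\N_B$ and $x - \tilde x \in \null$. For well-definedness I need to show the class $[\tilde x]_\o$ is independent of the choice of bounded representative: if $\tilde x, \tilde y$ are both bounded with $[\tilde x]_\frechet = [\tilde y]_\frechet$, then $\tilde x - \tilde y \in \null$, and since an eventually-zero sequence $z$ trivially satisfies $n z_n = 0$ eventually, we have $\null \subseteq \o$; hence $[\tilde x]_\o = [\tilde y]_\o$. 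Surjectivity is then immediate: for any $[x]_\o \in \fR$, pick a bounded representative $x \in \R^\N_B$ — this is finite in $\nhR$ and $i[x]_\frechet = [x]_\o$.

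For the order preservation of $i$, suppose $[x]_\frechet \leq_\frechet [y]_\frechet$, and choose bounded representatives $\tilde x, \tilde y$ as above. The set $\{n : \tilde x_n \leq \tilde y_n\}$ is cofinite (differing at worst on a finite set from $\{n : x_n \leq y_n\}$), so there exists $\bar n$ with $\tilde x_n \leq \tilde y_n$ for all $n \geq \bar n$. Taking $z = 0 \in \o$ in the definition of $\leq_\o$ yields $i[x]_\frechet \leq_\o i[y]_\frechet$.

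The only genuine step requiring thought is the inclusion $\null \subseteq \o$ and the passage to a bounded representative for elements of $\nhR_F$; the rest follows from the chain of filter/ideal containments $\null \subseteq \o$ and $\frechet \subseteq \U$ together with the fact that every element of the target quotients is manifestly in the image of a quotient map out of $\R^\N$ or $\R^\N_B$.
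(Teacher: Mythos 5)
Your proof is correct and follows the same strategy as the paper's, which reduces everything to the inclusions $\null\subseteq\o$ and $\frechet\subseteq\U$. You are in fact more careful about the well-definedness of $i$: the paper's phrasing ``if $[x]_\frechet\in\nhR_F$ then $x\in\R^\N_B$'' holds only for a suitably chosen representative modulo $\null$, and your explicit truncation $\tilde x$ (together with the check that $x-\tilde x\in\null$ and that $[\tilde x]_\o$ is independent of the bounded representative) supplies exactly that missing step.
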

\begin{proof}
	Recall the isomorphism between $\nhR$ and $\R^\N / \null$.
	If $[x]_\frechet \in \nhR_F$, then $x \in \R^\N_B$, so that $[x]_\o \in \fR$.
	Since $\o \supset \null$, $i$ is well-defined, surjective and order preserving.
	%It is non-trivial, since $i[1/n]_{\frechet} = [1/n]_{\o} \not = 0$.
	%Notice that $i[1/n]_{\o}$ is a nonzero infinitesimal belonging to $\gioR \subset \fR$.
	Similarly, the inclusion $\U \supset \frechet$ entails that $j$ has the desired properties.
\end{proof}

\subsection{Infinitesimal comparisons}

We are now ready to prove that there exists a nonzero $r \in \nhR$ that is ``too small'' to be registered as nonzero in $\gioR$, while it is seen as a nonzero infinitesimal in $\hR$.
Notice that this result does not depend upon the choice of ultrafilter $\U$ used in the definition of $\hR$.

\begin{proposition}\label{mainresult}
	There exists $r \in \nhR_F$ such that $i(r) = 0$ in $\fR$ and $j(r) \not =0$ in $\hR$
\end{proposition}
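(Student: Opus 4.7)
The plan is to exhibit an explicit witness. The idea is to choose a bounded sequence that decays faster than $1/n$ (so it lies in $\o$) but whose terms are everywhere nonzero (so it cannot be killed by any nonprincipal ultrafilter). The obvious candidate is the sequence $x_n = 1/n^2$, and the corresponding element $r = [1/n^2]_\frechet \in \nhR$.

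First I would check that $r \in \nhR_F$. Since $0 < 1/n^2 \leq 1$ for every $n \in \N$, the sequence $x$ is bounded, so by the isomorphism of Lemma \ref{lemma isomorfismo} we may regard $r$ as a finite element of $\nhR$; in particular $r$ lies in the domain of $i$.

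Next I would verify $i(r) = 0$. By definition $i(r) = [x]_\o$, so I only need to show $x \in \o$. This is immediate: $\lim_{n\to\infty} n \cdot (1/n^2) = \lim_{n\to\infty} 1/n = 0$. Hence $[x]_\o = 0$ in $\fR$.

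Finally I would show $j(r) \neq 0$. By definition $j(r) = [x]_\U$, and $j(r) = 0$ would mean $\{n \in \N : x_n = 0\} \in \U$. But $1/n^2 \neq 0$ for every $n$, so $\{n : x_n = 0\} = \emptyset \notin \U$ (since $\U$ is a proper filter). Therefore $j(r) \neq 0$; in fact $j(r)$ is a positive infinitesimal in $\hR$, since for each $m \in \N$ the inequality $0 < 1/n^2 < 1/m$ holds for all but finitely many $n$, hence on a set in $\U \supset \frechet$.

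There is no real obstacle here; the proof is a direct verification once one notices the asymmetry between the ideals at play: the kernel of the map to $\fR$ is $\o$, which contains nontrivial sequences like $1/n^2$, whereas the ultrafilter quotient identifies only sequences that vanish on a set in $\U$, and an everywhere-nonzero sequence evidently cannot do this. The conceptual content of the statement is precisely this asymmetry, which is exactly what motivates the infinitesimal comparison in the paper.
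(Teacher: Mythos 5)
Your proof is correct and uses exactly the same witness $x_n = 1/n^2$ and the same verification steps as the paper. The only difference is cosmetic: where the paper invokes the Transfer Principle to conclude $j(r) \neq 0$, you argue directly from the ultrapower definition (the zero set of $x$ is empty, hence not in $\U$), which is a cleaner and more elementary justification of that step.
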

\begin{proof}
	Let $x \in \R^\N$ be defined as $x_n = 1/n^2$, and let $r = [x]_{\frechet}$.
	Since the sequence $r$ is bounded, $r \in \nhR_F$, so that $i(r)$ is well-defined.
	Since $x \in \o$, $i(r) = 0$ in $\fR$.
	On the other hand, since $x_n \not = 0$ for all $n \in \N$, the Transfer Principle of nonstandard analysis ensures that $j(r) \not = 0$.
\end{proof}

%Notice that, since $1/n^2$ converges to $0$, $[1/n^2]_\frechet$ is an infinitesimal in $\nhR$.
%From the previous Lemma, we deduce that the element $[1/n^2]_\frechet$ is too small to be distinguished from $0$ in $\fR$ (and, as a consequence, also of the ring $\gioR)$, while it is seen as a nonzero infinitesimal in $\hR$.

\section{Conclusion}

\cite{reeder} argues that the nilpotent infinitesimals in Paolo Giordano's ring are smaller than the infinitesimals of Robinson's nonstandard analysis.
The core of the argument is that the only nontrivial homomorphism from $\gioR$ into $\hR$ is the standard part map, so that the infinitesimals in $\gioR$ are too small to be properly represented in $\hR$.
On the other hand, the existence of a non-trivial homomorphism from $\hR$ into $\gioR$ which is different from the standard part map shows that some infinitesimal hyperreals can be represented as nonzero infinitesimals in $\gioR$.
As a consequence, Reeder suggests that the infinitesimals in $\gioR$ are smaller than those of $\hR$.

As an alternative approach, we proposed to exploit the quotient structure of $\gioR$ and of an ultrapower construction of $\hR$ in order to study some canonical homomorphisms between these structures and $\nhR$, the ring extension of real numbers in Henle's non-nonstandard analysis.
It turned out that there exist nonzero infinitesimals in $\nhR$ that are too small to be represented in $\gioR$, while they are seen as nonzero infinitesimals from the point of view of $\hR$.
Following an argument similar to the one exposed by \cite{reeder}, this result seems to support the claim that the infinitesimals in $\hR$ are more fine-grained than those in $\gioR$.

We believe that the apparent contradiction between our result and Reeder's conclusions on the size of the infinitesimals in $\gioR$ and in $\hR$ arises only because the algebraic structure of $\gioR$ is radically different from the algebraic structure of $\hR$: as seen in Proposition \ref{proposition nil}, it is the presence of nilpotent elements in $\gioR$ that prevents the existence of a wide range of non-trivial homomorphisms from $\gioR$ into the field $\hR$.
Consequently, we suggest that a meaningful comparison of the size of the nilpotent infinitesimals in Paolo Giordano's extension of the real numbers with the infinitesimals of a hyperreal field of nonstandard analysis should not rely only on the study of the homomorphisms between the two structures.

\subsection*{Acknowledgements}
Most of the paper was written while the author was visiting the Department of Mathematics of the University of Trento, Italy.
The author thanks an anonymous referee and prof.\ Mikhail G.\ Katz for valuable comments on a previous version of this paper.

\end{document}